\documentclass[10pt]{article}

\usepackage[margin=0.85in]{geometry}
\usepackage{amsmath,amssymb,amsthm,mathtools}
\usepackage{enumitem}
\usepackage[colorlinks=true,linkcolor=blue,citecolor=blue,urlcolor=blue]{hyperref}
\usepackage{lmodern}

\title{Paving Matroids That Are Not Sparse Paving}

\author{
Mohsen Aliabadi\\
Department of Mathematics, Clayton State University\\
Morrow, GA, USA\\
\texttt{mohsenaliabadi@clayton.edu}
}

\date{}

\numberwithin{equation}{section}

\newtheorem{theorem}{Theorem}[section]
\newtheorem{lemma}[theorem]{Lemma}
\newtheorem{proposition}[theorem]{Proposition}
\newtheorem{corollary}[theorem]{Corollary}
\newtheorem{remark}[theorem]{Remark}
\newtheorem{problem}[theorem]{Problem}

\newcommand{\cB}{\mathcal B}
\newcommand{\cC}{\mathcal C}
\newcommand{\cH}{\mathcal H}
\newcommand{\cS}{\mathcal S}
\newcommand{\Z}{\mathbb Z}

\begin{document}

\maketitle

\begin{abstract}
The Mayhew--Newman--Welsh--Whittle conjecture predicts that asymptotically almost all
matroids are sparse paving. We study the gap between paving and sparse paving matroids at
the logarithmic scale. Let \(p_n\) be the number of paving matroids on \([n]\), let
\(sp_n\) be the number of sparse paving matroids on \([n]\), and let \(sp_{n,r}\) be the
number of rank-\(r\) sparse paving matroids on \([n]\). We prove that
\[
        p_n-sp_n\ge sp_{n,\lfloor n/2\rfloor}^{1-o(1)}.
\]
Thus the paving matroids that are not sparse paving are themselves logarithmically large.
The construction prescribes one hyperplane larger than the rank and then counts stable
sets in an induced subgraph of a Johnson graph. We also give amplified versions obtained
by varying the large hyperplane and by prescribing distance-six families of large
hyperplanes.
\end{abstract}

\noindent
\textbf{2020 Mathematics Subject Classification.} 05B35, 05A16.

\noindent
\textbf{Keywords.} Matroids, paving matroids, sparse paving matroids, Johnson graphs,
stable sets, constant-weight codes, asymptotic enumeration.

\section{Introduction}

A matroid is a combinatorial abstraction of linear independence. In this paper all
matroids are finite. If \(E\) is a finite set, a matroid \(M\) on \(E\) may be specified by a
nonempty collection \(\cB(M)\) of subsets of \(E\), called bases, satisfying the basis
exchange axiom: for all \(B,B'\in\cB(M)\) and all \(e\in B\setminus B'\), there exists
\(f\in B'\setminus B\) such that
\[
        B-e+f\in \cB(M).
\]
All bases have the same cardinality; this common cardinality is the rank of \(M\), denoted
\(r(M)\). We use standard matroid terminology, including circuit, flat, hyperplane, and
dual matroid, as in Oxley \cite{Oxl11}.

A rank-\(r\) matroid is called \emph{paving} if every circuit has size at least \(r\). It is
called \emph{sparse paving} if both \(M\) and its dual \(M^*\) are paving. Equivalently, a
rank-\(r\) matroid is sparse paving if every rank-\(r\) nonbasis is a circuit-hyperplane.
In particular, in a rank-\(r\) sparse paving matroid, every hyperplane of cardinality at
least \(r\) has cardinality exactly \(r\). Indeed, if a hyperplane \(H\) has \(|H|\ge r\),
then any \(r\)-subset of \(H\) is a rank-\(r\) nonbasis and hence is a circuit-hyperplane;
maximality of hyperplanes then forces \(|H|=r\). Therefore a rank-\(r\) paving matroid with
a hyperplane of size greater than \(r\) is not sparse paving.

Sparse paving matroids are central in asymptotic matroid enumeration. Let \(m_n\) denote
the number of matroids on the fixed ground set \([n]=\{1,\ldots,n\}\), let \(p_n\) denote
the number of paving matroids on \([n]\), and let \(sp_n\) denote the number of sparse
paving matroids on \([n]\). More generally, let \(sp_{n,r}\) denote the number of rank-\(r\)
sparse paving matroids on \([n]\).

A conjecture of Mayhew, Newman, Welsh, and Whittle \cite{MNWW11}, motivated by earlier
predictions of Crapo and Rota \cite{CR70}, asserts that
\[
        \lim_{n\to\infty}\frac{sp_n}{m_n}=1.
\]
Pendavingh and van der Pol proved the logarithmic version
\[
        \lim_{n\to\infty}\frac{\log sp_n}{\log m_n}=1;
\]
see \cite{PvdP15}. Since \(sp_n\le p_n\le m_n\), it follows that
\[
        \lim_{n\to\infty}\frac{\log p_n}{\log sp_n}=1.
\]
Thus the known logarithmic-scale theory does not separate paving matroids from sparse
paving matroids.

The purpose of this note is to study the complement of the sparse paving class inside the
paving class. The main construction prescribes one hyperplane larger than the rank and
then adds rank-sized hyperplanes indexed by stable sets in an induced subgraph of a
Johnson graph. A lemma of Pendavingh and van der Pol on stable sets in induced subgraphs
of vertex-transitive graphs allows us to count all stable sets in this induced graph,
rather than only those coming from one explicit Graham--Sloane color class.

Our first main result is the following.

\begin{theorem}\label{thm:main}
Let \(p_n\) be the number of paving matroids on \([n]\), let \(sp_n\) be the number of
sparse paving matroids on \([n]\), and let \(sp_{n,r}\) be the number of rank-\(r\) sparse
paving matroids on \([n]\). Then
\[
        p_n-sp_n
        \ge
        sp_{n,\lfloor n/2\rfloor}^{1-o(1)}.
\]
Equivalently,
\[
        \log(p_n-sp_n)
        \ge
        (1-o(1))\log sp_{n,\lfloor n/2\rfloor}.
\]
\end{theorem}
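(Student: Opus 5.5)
Fix $r=\lfloor n/2\rfloor$ and a set $H_0\subseteq[n]$ with $|H_0|=r+1$. We build many rank-$r$ paving matroids on $[n]$ in which $H_0$ is a hyperplane. Each of them has a hyperplane of size greater than $r$, so by the remark in the introduction none of them is sparse paving. Their number then bounds $p_n-sp_n$ from below.

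\textbf{The construction.} We use the standard description of paving matroids: a family $\cH$ of subsets of $[n]$, each of size at least $r$, is the hyperplane family of a rank-$r$ paving matroid if and only if any two distinct members meet in at most $r-2$ elements. Let $J(n,r)$ be the Johnson graph, whose vertices are the $r$-subsets of $[n]$, two being adjacent when they meet in $r-1$ elements. Let $V$ be the set of $r$-sets $X$ with $|X\cap H_0|\le r-2$, and let $G=J(n,r)[V]$. For every stable set $S$ of $G$, the family $\{H_0\}\cup S$ satisfies the intersection condition. Indeed, members of $S$ pairwise meet in at most $r-2$ elements, and each member meets $H_0$ in at most $r-2$ elements by the choice of $V$. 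The remaining hyperplanes are the $(r-1)$-sets not covered by any member of the family; adding them gives a valid paving matroid whose big hyperplanes are exactly $\{H_0\}\cup S$. Distinct stable sets therefore give distinct matroids, so $p_n-sp_n\ge i(G)$, the number of stable sets of $G$.

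\textbf{Counting stable sets.} The vertices of $J(n,r)$ excluded from $V$ are those with $|X\cap H_0|\in\{r-1,r\}$. There are $\binom{r+1}{r}+\binom{r+1}{r-1}(n-r-1)=O(n^3)$ of them. This is a vanishing fraction of $\binom{n}{r}$, since $\binom nr\sim 2^n/\sqrt{n}$. We now invoke the Pendavingh--van der Pol lemma on induced subgraphs of the vertex-transitive graph $J(n,r)$. In the form I expect, it says $i(J[V])\ge i(J)^{|V|/|V(J)|}$, or a comparable inequality $\log i(J[V])\ge \frac{|V|}{|V(J)|}\log i(J)$ obtained by averaging over automorphism-translates of $V$ together with Shearer-type subadditivity. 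This gives
\[
\log i(G)\ge (1-O(n^{3}/\tbinom nr))\log i(J(n,r)).
\]
Finally, sparse paving matroids of rank $r$ correspond exactly to stable sets of $J(n,r)$ (their sets of circuit-hyperplanes), so $i(J(n,r))=sp_{n,r}$. Hence $\log(p_n-sp_n)\ge(1-o(1))\log sp_{n,r}$.

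\textbf{Main obstacle.} The key step is the transfer from $J(n,r)$ to the induced subgraph $G$. If the lemma only applies when $V$ has density bounded below, or has an additive error term, we must check that the error is $o(\log sp_{n,r})$. This holds because $\log sp_{n,r}\ge \frac1n\binom nr$ by the Graham--Sloane bound, which is far larger than $O(n^3)$. As a fallback we can avoid the lemma altogether. Take the largest Graham--Sloane color class $C$, so $|C|\ge\frac1n\binom nr$, delete its $O(n^3)$ vertices lying outside $V$, and count its subsets. This gives $\log_2 i(G)\ge \frac1n\binom nr-O(n^3)$. Combined with the upper bound $\log sp_{n,r}\le O(\frac{\log n}{n})\binom nr$ from \cite{PvdP15}, however, this only yields the theorem up to a $\log n$ factor. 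So the sharp $1-o(1)$ genuinely requires the vertex-transitive stable-set lemma, and verifying its hypotheses for $J(n,r)$ and $V$ is the main point to get right.
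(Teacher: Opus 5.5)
Your proposal is correct and follows essentially the same route as the paper: you prescribe one hyperplane $H_0$ of size $r+1$, map stable sets of $J(n,r)[V_{H_0}]$ injectively to non-sparse paving matroids, and apply the Pendavingh--van der Pol inequality $i(J(n,r)[V])\ge i(J(n,r))^{|V|/\binom nr}$ with only $O(n^3)$ deleted vertices. The one difference is that you cite the $(r-1)$-partition description of paving matroids instead of proving it as in Lemma~\ref{lem:hyperplane-construction}; that description tacitly requires $H_0\neq[n]$ so that some $r$-set is a basis, which holds here and is exactly what Lemma~\ref{lem:nonempty-one-H} checks.
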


Theorem \ref{thm:main} is a logarithmic-scale result. It does not imply that sparse paving
matroids fail to dominate all matroids, nor even that they fail to dominate paving
matroids. Instead, it gives a quantitative measure of how large the non-sparse part of the
paving class is at the same logarithmic scale as the central middle-rank sparse paving
class.

The proof also gives a fixed-rank amplification. When the prescribed large hyperplane has
size \(r+1\), it is uniquely recoverable from the resulting matroid. Therefore the
fixed-hyperplane count can be multiplied by the number of choices of the large hyperplane.

\begin{theorem}\label{thm:amplified-one}
Let \(r=\lfloor n/2\rfloor\), and set
\[
        \delta_n=
        \frac{(r+1)+\binom{r+1}{2}(n-r-1)}{\binom nr}.
\]
Then
\[
        p_n-sp_n
        \ge
        \binom n{r+1} sp_{n,r}^{1-\delta_n}.
\]
In particular,
\[
        p_n-sp_n
        \ge
        \binom n{\lfloor n/2\rfloor+1}
        sp_{n,\lfloor n/2\rfloor}^{1-o(1)}.
\]
\end{theorem}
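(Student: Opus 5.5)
Fix \(r=\lfloor n/2\rfloor\) and an \((r+1)\)-set \(H\subseteq[n]\). The plan is to build, for each suitable family \(\cS\) of \(r\)-sets, a rank-\(r\) paving matroid \(M_{H,\cS}\) with nonbases exactly \(\binom{H}{r}\cup\cS\). The family \(\cS\) will range over stable sets of the Johnson graph \(J(n,r)\) (adjacency \(|X\triangle Y|=2\)) restricted to the vertex set
\[
V_H=\Bigl\{X\in\tbinom{[n]}{r}: |X\cap H|\le r-2\Bigr\}.
\]
Every \(r\)-set with \(|X\cap H|\ge r-1\) is either inside \(H\) (there are \(r+1\) of these) or meets \(H\) in exactly \(r-1\) points. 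The latter number \(\binom{r+1}{r-1}(n-r-1)=\binom{r+1}{2}(n-r-1)\). Hence
\[
|V(J(n,r))\setminus V_H| = (r+1)+\tbinom{r+1}{2}(n-r-1) = \delta_n\tbinom nr.
\]

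\textbf{Step 1: validity.} I would use the standard hyperplane description of paving matroids. A family of subsets of size at least \(r\), each pair meeting in at most \(r-2\) elements and covering every \((r-1)\)-set, is the hyperplane set of a rank-\(r\) paving matroid. Take as hyperplanes \(H\), the sets in \(\cS\), and all \((r-1)\)-sets not contained in any of these.

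The intersection conditions hold as follows:
\begin{itemize}[nosep]
\item \(|X\cap Y|\le r-2\) for distinct \(X,Y\in\cS\), by stability in \(J(n,r)\).
\item \(|X\cap H|\le r-2\) for \(X\in\cS\), by the definition of \(V_H\).
\end{itemize}
Then \(H\) is a hyperplane of size \(r+1>r\), so \(M_{H,\cS}\) is paving but not sparse paving. Distinct \(\cS\) give distinct matroids, since \(\cS\) is recovered as the set of \(r\)-nonbases not contained in \(H\).

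\textbf{Step 2: recovering \(H\).} I would show \(H\) is the unique hyperplane of size exceeding \(r\) in \(M_{H,\cS}\), so \(H\) is determined by the matroid. Consequently the pairs \((H,\cS)\) inject into the set of paving non-sparse-paving matroids, which gives
\[
p_n-sp_n\ge \binom n{r+1}\, i(J(n,r)[V_H]),
\]
where \(i(\cdot)\) counts stable sets.

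\textbf{Step 3: counting stable sets.} Here I invoke the Pendavingh--van der Pol lemma on induced subgraphs of vertex-transitive graphs: for vertex-transitive \(G\) and \(W\subseteq V(G)\),
\[
i(G[W])\ge i(G)^{|W|/|V(G)|}.
\]
Its proof is an averaging/entropy argument over automorphic images of \(W\). Apply it with \(G=J(n,r)\) and \(W=V_H\); since \(|V_H|/\binom nr=1-\delta_n\), this gives
\[
i(J(n,r)[V_H])\ge i(J(n,r))^{1-\delta_n}.
\]
Stable sets of \(J(n,r)\) are in bijection with rank-\(r\) sparse paving matroids on \([n]\), via their sets of circuit-hyperplanes, so \(i(J(n,r))=sp_{n,r}\). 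This yields the first inequality.

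\textbf{Step 4: asymptotics.} For the "in particular" statement, note that \(\delta_n=O(n^3/\binom{n}{n/2})\to0\), so \(sp_{n,r}^{1-\delta_n}=sp_{n,r}^{1-o(1)}\).

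The main obstacle I expect is making sure the quoted lemma has exactly this form (exponent \(|W|/|V|\)), or adapting it if it is stated with \(\log\) ratios. I also need to check that no \((r-1)\)-set is covered twice, which is exactly the pairwise \(\le r-2\) intersection condition already verified in Step 1.
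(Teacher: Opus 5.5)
Your proposal is correct and follows the paper's proof. You fix an \((r+1)\)-set \(H\) and inject the stable sets of \(J(n,r)[V_H]\) into non-sparse paving matroids. You recover \(H\) as the unique hyperplane of size \(r+1\), which makes the families for different \(H\) disjoint. Finally you apply the Pendavingh--van der Pol induced-subgraph lemma with \(|V_H|/\binom nr=1-\delta_n\).

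The only difference is in the validity step. You cite the classical \((r-1)\)-partition (Hartmanis) description of paving matroids, where the blocks should be required to have size at least \(r-1\) rather than \(r\), since you pad with \((r-1)\)-sets. The paper instead proves the needed statement directly by basis exchange in Lemma~\ref{lem:hyperplane-construction}.
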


We also give a multi-hyperplane construction. The large hyperplanes must form a
distance-six constant-weight code. This gives a clean unconditional lower bound and a
hybrid construction combining large hyperplanes with the induced-Johnson stable-set
method; see Theorems \ref{thm:distance-six} and \ref{thm:hybrid}.

As an immediate consequence of the Graham--Sloane lower bound for sparse paving matroids,
Theorem \ref{thm:main} gives the following explicit estimate.

\begin{corollary}\label{cor:GS-scale}
We have
\[
        p_n-sp_n
        \ge
        2^{\frac{1-o(1)}{n}\binom n{\lfloor n/2\rfloor}}.
\]
\end{corollary}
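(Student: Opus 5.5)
Corollary \ref{cor:GS-scale} follows by combining Theorem \ref{thm:main} with the Graham--Sloane lower bound for \(sp_{n,r}\), and I will carry this out in three steps.

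First, I recall the Graham--Sloane construction. For \(r=\lfloor n/2\rfloor\), partition \(\binom{[n]}{r}\) into the \(n\) classes
\[
\cS_a=\Bigl\{X\in\binom{[n]}{r} : \sum_{x\in X}x\equiv a \pmod n\Bigr\},\qquad a\in\Z_n .
\]
Two distinct \(r\)-sets in the same class cannot satisfy \(|X\triangle Y|=2\). If they did, \(Y=X-x+y\), so \(y\equiv x \pmod n\) and hence \(y=x\). Thus each \(\cS_a\) is a stable set in the Johnson graph \(J(n,r)\).

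Second, I use the standard fact that every stable set \(\cS\) of \(J(n,r)\) gives a distinct sparse paving matroid whose nonbases are exactly \(\cS\). Every subset of a stable set is again stable. By pigeonhole, some class has size at least \(\frac1n\binom nr\), so
\[
sp_{n,r}\ \ge\ 2^{\frac1n\binom nr}.
\]

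Third, I substitute this into Theorem \ref{thm:main}. This gives
\[
\log_2(p_n-sp_n)\ \ge\ (1-o(1))\log_2 sp_{n,r}\ \ge\ \frac{1-o(1)}{n}\binom{n}{\lfloor n/2\rfloor}.
\]
The only point needing care is that the \(o(1)\) of Theorem \ref{thm:main} is a multiplicative factor on the exponent. This holds because \(\log sp_{n,r}\to\infty\), which the bound just proved guarantees. Exponentiating then yields the corollary.

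There is no substantial obstacle here. The only steps requiring checking are the mod-\(n\) stable-set property and the bookkeeping of the \(o(1)\) term.
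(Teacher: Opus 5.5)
Your proof is correct and follows the paper's argument: the paper cites the Graham--Sloane bound $sp_{n,r}\ge 2^{\binom nr/n}$ and substitutes it into Theorem~\ref{thm:main}, while you re-derive that bound from the mod-$n$ sum classes (the same classes the paper uses in Proposition~\ref{prop:GS-punctured}). Your remark that $\log sp_{n,r}\to\infty$ is superfluous, since the $o(1)$ in Theorem~\ref{thm:main} is already multiplicative in the exponent; the only thing actually used is that $1-o(1)>0$ for large $n$, so multiplying $\log_2 sp_{n,r}\ge \frac1n\binom nr$ by it preserves the inequality.
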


The final section explains why these results do not constitute a ratio-scale attack on the
Mayhew--Newman--Welsh--Whittle conjecture. More precisely, we prove that a
one-large-hyperplane method would need a ratio estimate of the form
\[
        \frac{i(J(n,r)[V_H])}{i(J(n,r))}
        =
        \omega\left(\binom n{r+1}^{-1}\right),
        \qquad
        \bigl(r=\lfloor n/2\rfloor,\ |H|=r+1\bigr),
\]
where \(i(G)\) denotes the number of stable sets of a graph \(G\). Since
\[
        \binom n{r+1}=2^{n-\frac12\log_2 n+O(1)}
\]
in the middle rank, this requires a lower bound at essentially reciprocal-binomial scale.
The currently available entropy and Graham--Sloane estimates preserve the logarithmic
scale but fall far short of this ratio-scale requirement.

All logarithms may be taken in any fixed base. For definiteness, one may take them to be
base \(2\). Since \(p_n-sp_n\) counts non-sparse paving matroids over all ranks, every
fixed-rank construction below gives a valid lower bound for \(p_n-sp_n\).

\section{Preliminaries}

Let \(E\) be a finite set and let \(0<r<|E|\). The \emph{Johnson graph} \(J(E,r)\) is the
graph whose vertex set is
\[
        \binom Er=\{X\subseteq E: |X|=r\},
\]
with two vertices \(X,Y\) adjacent if and only if \(|X\cap Y|=r-1\). When \(E=[n]\), we
write \(J(n,r)\) for \(J([n],r)\). For a graph \(G\), let \(i(G)\) denote the number of
stable sets of \(G\), including the empty stable set.

We shall use the standard characterization of sparse paving matroids by stable sets in
Johnson graphs.

\begin{lemma}\label{lem:sparse-paving-stable}
Let \(E\) be a finite set and let \(0<r<|E|\). A subset \(\cS\subseteq\binom Er\) is a
stable set in \(J(E,r)\) if and only if
\[
        \binom Er\setminus \cS
\]
is the set of bases of a rank-\(r\) sparse paving matroid on \(E\). Consequently,
\[
        sp_{n,r}=i(J(n,r)).
\]
\end{lemma}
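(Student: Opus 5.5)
We prove the two implications directly from the basis exchange axiom, and then read off the count. Throughout, write \(\cB=\binom Er\setminus\cS\). The fact we rely on is that two distinct \(r\)-sets \(X,Y\) are adjacent in \(J(E,r)\) exactly when \(|X\triangle Y|=2\), that is, when \(Y=X-e+f\) for some \(e\in X\) and \(f\notin X\).

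\emph{Stable implies sparse paving.} Assume \(\cS\) is stable. First, \(\cB\) is nonempty: any \(X\in\cS\) has at least \(r(|E|-r)\ge 1\) neighbours, and none of them lies in \(\cS\). To check basis exchange, let \(B,B'\in\cB\) and \(e\in B\setminus B'\).
\begin{itemize}
\item If \(|B\setminus B'|=1\), write \(B'\setminus B=\{f\}\). Then \(B-e+f=B'\in\cB\).
\item If \(|B\setminus B'|\ge 2\), suppose for contradiction that \(B-e+f\in\cS\) for every \(f\in B'\setminus B\). Take two distinct such \(f,f'\). The sets \(B-e+f\) and \(B-e+f'\) meet in \(r-1\) elements, so they are adjacent in \(J(E,r)\). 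This contradicts stability.
\end{itemize}
Hence \(\cB\) is the basis family of a rank-\(r\) matroid \(M\). Its rank-\(r\) nonbases are exactly the members of \(\cS\), and we show each is a circuit-hyperplane. Let \(X\in\cS\). Every neighbour of \(X\) is a basis, so every \((r-1)\)-subset of \(X\) extends to a basis and is therefore independent. Since \(X\) itself is dependent, \(X\) is a circuit. Moreover, \(X+f\) contains the basis \(X-e+f\) for every \(f\notin X\), so \(X\) has rank \(r-1\) and adding any element raises its rank. Thus \(X\) is a flat of rank \(r-1\), i.e.\ a hyperplane. By the equivalent characterization recalled in the introduction, \(M\) is sparse paving.

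\emph{Sparse paving implies stable.} Conversely, suppose \(\cB\) is the basis family of a rank-\(r\) sparse paving matroid, and let \(X,Y\in\cS\) be adjacent. Both are circuit-hyperplanes, so both are flats of rank \(r-1\). Their intersection \(X\cap Y\) is a flat, and it has \(r-1\) elements. It is a proper subset of the circuit \(X\), hence independent, so it has rank \(r-1\). A rank-\((r-1)\) flat contained in the hyperplane \(X\) must equal \(X\), which is impossible since \(|X\cap Y|=r-1<r=|X|\). Therefore \(\cS\) is stable.

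\emph{Counting.} The map \(\cS\mapsto\binom{[n]}r\setminus\cS\) is a bijection between stable sets of \(J(n,r)\), including the empty one, and basis families of rank-\(r\) sparse paving matroids on \([n]\). This gives \(sp_{n,r}=i(J(n,r))\).

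The only step requiring care is the exchange axiom when \(|B\setminus B'|\ge2\); everything there reduces to the observation that two one-element swaps of \(B\) removing the same \(e\) are adjacent in \(J(E,r)\).
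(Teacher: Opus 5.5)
Your proof is correct, and it takes a different and sounder route than the paper for one direction. For ``stable \(\Rightarrow\) sparse paving'', your exchange argument is the paper's: the two swaps \(B-e+f\) and \(B-e+f'\), which remove the same \(e\), are adjacent. You go further than the paper, though. The paper dismisses the paving and sparse-paving properties as ``the usual consequences''. You instead check directly that each \(X\in\cS\) is a circuit-hyperplane and then invoke the characterization from the introduction. One trivial gap: your nonemptiness sentence presupposes \(\cS\ne\varnothing\), but if \(\cS=\varnothing\) then \(\cB=\binom Er\ne\varnothing\) anyway.

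The real difference is in the converse. The paper claims that two circuit-hyperplanes meeting in \(r-1\) elements would, by circuit elimination, yield a circuit of size at most \(r-1\). That does not follow. Eliminating a common element from two \(r\)-element circuits whose union has \(r+1\) elements gives only a circuit of size at most \(r\), which paving permits. Indeed, any two \(r\)-subsets of an \((r+1)\)-element hyperplane of a paving matroid are circuits meeting in \(r-1\) elements, and these are exactly the objects the paper builds later. So some use of the flat property is unavoidable, and your argument supplies it. \(X\cap Y\) is a flat (an intersection of flats) and an independent \((r-1)\)-set, so it is a rank-\((r-1)\) flat inside the hyperplane \(X\). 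Equivalently, it spans \(X\) while lying in the flat \(Y\). Either way this forces \(X=Y\). On this direction your route is the correct one, and it is no longer than the paper's.
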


\begin{proof}
This is the standard sparse-paving/Johnson-stable-set correspondence; see
Piff and Welsh \cite{PW70} and Pendavingh--van der Pol \cite{PvdP15,PvdP16}. We recall
the argument. In a rank-\(r\) sparse paving matroid, the rank-\(r\) nonbases are
circuit-hyperplanes. Two distinct circuit-hyperplanes cannot meet in \(r-1\) elements,
because then the circuit-elimination axiom would give a circuit of size at most \(r-1\),
contradicting the paving property. Hence the rank-\(r\) nonbases form a stable set in
\(J(E,r)\).

Conversely, if \(\cS\subseteq\binom Er\) is stable, then the family
\(\binom Er\setminus\cS\) satisfies the basis exchange axiom and defines a sparse paving
matroid whose circuit-hyperplanes are precisely the members of \(\cS\). Indeed, if
\(B_1,B_2\in\binom Er\setminus\cS\), \(e\in B_1\setminus B_2\), and
\(B_1-e+f\in\cS\) for every \(f\in B_2\setminus B_1\), then any two of these sets meet in
at least \(r-1\) elements after varying one element of the fixed set \(B_1-e\), forcing two
members of \(\cS\) to be adjacent unless \(|B_2\setminus B_1|=1\). In the remaining case
\(B_2=B_1-e+f\), this set is a basis by assumption. Thus basis exchange holds. The paving
and sparse-paving properties are the usual consequences of the fact that the only
rank-\(r\) nonbases are pairwise nonadjacent \(r\)-sets. This is the classical
construction.
\end{proof}

We shall also use the following lemma of Pendavingh and van der Pol. It follows from
Shearer's entropy lemma and applies to induced subgraphs of vertex-transitive graphs.

\begin{lemma}\label{lem:PvdP-induced}
Let \(G=(V,E)\) be a finite vertex-transitive graph, and let \(\varnothing\ne U\subseteq V\).
Then
\[
        \frac{\log i(G)}{|V|}
        \le
        \frac{\log i(G[U])}{|U|}.
\]
Equivalently,
\[
        i(G[U])\ge i(G)^{|U|/|V|}.
\]
\end{lemma}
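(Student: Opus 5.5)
The plan is to deduce Lemma \ref{lem:PvdP-induced} from Shearer's entropy lemma applied to a uniformly random stable set of \(G\), using vertex-transitivity to turn an average over translates of \(U\) into a covering.

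\textbf{Setup.} Let \(\Gamma=\mathrm{Aut}(G)\), which acts transitively on \(V\). Consider the family of translates \(\{gU: g\in\Gamma\}\), taken with multiplicity, one translate per group element. For a fixed vertex \(v\), the number of \(g\in\Gamma\) with \(v\in gU\) equals \(\sum_{u\in U}|\{g: g^{-1}v=u\}|\). Each set \(\{g: gu=v\}\) is a coset of a stabilizer, and by transitivity it has size \(|\Gamma|/|V|\). Hence every vertex lies in exactly \(k=|\Gamma||U|/|V|\) translates.

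\textbf{Entropy step.} Let \(S\) be a uniformly random stable set of \(G\), encoded as the indicator vector \(X=(X_v)_{v\in V}\). Then
\[
H(X)=\log i(G).
\]
Shearer's lemma, applied to a family covering each coordinate exactly \(k\) times, gives
\[
k\,H(X)\le \sum_{g\in\Gamma} H(X_{gU}).
\]
The restriction \(X_{gU}\) is the indicator of a stable set of \(G[gU]\). The map \(g\) gives an isomorphism \(G[U]\cong G[gU]\), so \(X_{gU}\) takes at most \(i(G[U])\) values. Therefore
\[
H(X_{gU})\le \log i(G[U]).
\]

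\textbf{Conclusion.} Combining these gives
\[
\frac{|\Gamma||U|}{|V|}\log i(G)\le |\Gamma|\log i(G[U]),
\]
which is exactly the claimed inequality. Exponentiating yields the equivalent form \(i(G[U])\ge i(G)^{|U|/|V|}\).

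The only delicate point is the exact-multiplicity count. If one preferred to avoid the group, one could instead average over a transitive set of automorphisms, or use the fractional form of Shearer's lemma. Everything else is routine.
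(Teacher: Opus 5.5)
Your proof is correct and follows essentially the same route as the paper. The paper likewise takes the translates \(U^g\), \(g\in\mathrm{Aut}(G)\), as a regular cover of \(V\) with multiplicity \(k=|\Gamma||U|/|V|\), and applies Shearer's lemma (in the product form \(i(G)^k\le\prod_g i(G[U^g])\), which is what your entropy computation amounts to) together with \(G[U^g]\cong G[U]\); your orbit--stabilizer count of the multiplicity spells out a step the paper only asserts.
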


\begin{proof}
This is Lemma 4.2 of Pendavingh and van der Pol \cite{PvdP16}. For completeness, we recall
the standard proof. Let \(\Gamma\) be the automorphism group of \(G\). For \(g\in\Gamma\),
put \(U^g=\{g(u):u\in U\}\). Since \(G\) is vertex-transitive, each vertex of \(G\) lies in
the same number of sets \(U^g\). Thus the family \(\{U^g:g\in\Gamma\}\) is a regular cover
of \(V\). Applying Shearer's product theorem to stable sets of \(G\), and then restricting
stable sets to the subsets \(U^g\), gives
\[
        i(G)^k\le \prod_{g\in\Gamma} i(G[U^g]),
        \qquad
        k=|\Gamma|\frac{|U|}{|V|}.
\]
Since \(G[U^g]\cong G[U]\) for every \(g\), this becomes
\[
        i(G)^{|\Gamma||U|/|V|}\le i(G[U])^{|\Gamma|}.
\]
Taking logarithms gives the result.
\end{proof}

The next lemma is the basic matroid construction used throughout the paper.

\begin{lemma}\label{lem:hyperplane-construction}
Let \(E\) be a finite set and let \(2\le r<|E|\). Let \(\cH\) be a family of proper subsets
of \(E\) such that
\[
        r\le |H|\le |E|-1
        \qquad(H\in\cH),
\]
and
\[
        |H_1\cap H_2|\le r-2
        \qquad(H_1\ne H_2,\ H_i\in\cH).
\]
Assume that at least one \(r\)-subset of \(E\) is not contained in any member of \(\cH\).
Define
\[
        \cB=
        \left\{B\in\binom Er:
        B\not\subseteq H \text{ for every }H\in\cH
        \right\}.
\]
Then \(\cB\) is the set of bases of a rank-\(r\) paving matroid \(M\) on \(E\). Moreover,
the hyperplanes of \(M\) having size at least \(r\) are exactly the members of \(\cH\).
\end{lemma}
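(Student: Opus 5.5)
We prove Lemma \ref{lem:hyperplane-construction} by verifying the hyperplane axioms for a paving matroid and then identifying the bases, rather than checking basis exchange directly. Let \(\cH'\) consist of the members of \(\cH\) together with every \((r-1)\)-subset \(X\subseteq E\) that lies in no member of \(\cH\).

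\textbf{Step 1: \(\cH'\) is the family of hyperplanes of a rank-\(r\) matroid.} We use the classical characterization of paving matroids: a family of subsets of \(E\), each of size at least \(r-1\), such that every \((r-1)\)-subset of \(E\) lies in exactly one member, is the hyperplane family of a rank-\(r\) paving matroid. This is Hartmanis' \(r\)-partition, as in Oxley \cite{Oxl11}.

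\emph{Covering.} Every \((r-1)\)-set lies in some member of \(\cH'\), by the definition of \(\cH'\).

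\emph{Uniqueness.} Suppose an \((r-1)\)-set lies in two members of \(\cH'\).
\begin{itemize}[leftmargin=2em]
\item If both belong to \(\cH\), their intersection has size at least \(r-1\), contradicting \(|H_1\cap H_2|\le r-2\).
\item If one of them is an added \((r-1)\)-set \(X\), then \(X\) is the given set itself, and \(X\) was chosen to lie in no member of \(\cH\).
\item Two distinct added \((r-1)\)-sets cannot both contain the same \((r-1)\)-set.
\end{itemize}

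\emph{Proper and nontrivial.} All members are proper subsets of \(E\), and \(E\) itself is not a member. This fixes the rank at \(r\) rather than something degenerate. The hypothesis that some \(r\)-set lies in no member of \(\cH\) guarantees that a basis exists.

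If one prefers not to cite the partition characterization, verify the hyperplane axioms directly: no member contains another, and for distinct \(H_1,H_2\in\cH'\) and \(e\notin H_1\cup H_2\), some member contains \((H_1\cap H_2)\cup\{e\}\). Since \(|H_1\cap H_2|\le r-2\), the set \((H_1\cap H_2)\cup\{e\}\) has size at most \(r-1\). Extend it to an \((r-1)\)-set; by covering, that set lies in some member of \(\cH'\). Note that \(r\ge2\) is used so that these sizes make sense.

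\textbf{Step 2: identify the bases.} In the resulting matroid \(M\), an \(r\)-set \(B\) is a nonbasis exactly when it is contained in a hyperplane. Added \((r-1)\)-sets are too small to contain \(B\), so \(B\) is a nonbasis iff \(B\subseteq H\) for some \(H\in\cH\). Hence the bases are exactly \(\cB\).

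\textbf{Step 3: paving and hyperplanes of size at least \(r\).} Every hyperplane has size at least \(r-1\), which gives the paving property (equivalently, all circuits have size at least \(r\)). The hyperplanes of size at least \(r\) are the members of \(\cH'\) of size at least \(r\), which are exactly the members of \(\cH\).

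The main obstacle is the uniqueness argument in Step 1, together with confirming that the cited partition characterization applies with the stated size bounds. If the citation is not wanted, the direct check of the hyperplane elimination axiom above is the routine replacement.
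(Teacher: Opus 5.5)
Your argument is correct, but it takes a different route from the paper. The paper works with $\cB$ directly:
\begin{itemize}
\item it verifies basis exchange by hand (the sets $B_1-e+f$ share the $(r-1)$-set $B_1-e$, so all the covering $H_f$ coincide and would contain $B_2$);
\item it gets paving by showing every $(r-1)$-set extends to a member of $\cB$;
\item it checks that each $H\in\cH$ is a hyperplane;
\item it proves the converse, that a hyperplane $F$ with $|F|\ge r$ lies in $\cH$, by connectivity of the Johnson graph on $\binom Fr$.
\end{itemize}
You instead complete $\cH$ to $\cH'$ by adjoining the uncovered $(r-1)$-sets. You recognize $\cH'$ as an $(r-1)$-partition in Hartmanis's sense, as in Oxley's book. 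The requirement $|\cH'|\ge 2$ holds because a single block would contain every $(r-1)$-set and hence equal $E$. You then read off both the bases and the \emph{entire} hyperplane family at once.

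This buys two things. First, the Johnson-connectivity argument for the converse disappears. Second, the nonemptiness hypothesis turns out to be redundant: the partition theorem already yields a matroid, which has a basis, and your Step~2 identifies its bases with $\cB$. You say the hypothesis ``guarantees that a basis exists,'' but your argument never uses it. Consequently Lemma~\ref{lem:nonempty-one-H} and the nonemptiness steps in the proofs of Theorems~\ref{thm:distance-six} and~\ref{thm:hybrid} would become unnecessary. In exchange, the paper's route is self-contained and does not depend on the partition characterization.

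Two small slips do not affect the main line.

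In Step~3, ``every hyperplane has size at least $r-1$'' cannot be the reason $M$ is paving, since every hyperplane of every rank-$r$ matroid has at least $r-1$ elements. Paving is already part of the conclusion of the partition theorem you cite. To see it directly: each $(r-1)$-set lies in a unique hyperplane, which is therefore its closure, so the set has rank $r-1$ and is independent.

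In your citation-free fallback, the hyperplane axioms only produce \emph{some} matroid with hyperplane family $\cH'$, and that its rank is $r$ needs an argument. For instance, suppose an $(r-1)$-set $Y$ were dependent. Then a proper subset $Y_0\subsetneq Y$ with the same closure would force every $(r-1)$-superset of $Y_0$ into the unique member of $\cH'$ containing $Y$. That member would then equal $E$, a contradiction.
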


\begin{proof}
By assumption, \(\cB\) is nonempty. We first verify the basis exchange axiom. Let
\(B_1,B_2\in\cB\), and let \(e\in B_1\setminus B_2\). Suppose, for contradiction, that
\[
        B_1-e+f\notin\cB
        \qquad
        \text{for every } f\in B_2\setminus B_1 .
\]
Then for each \(f\in B_2\setminus B_1\), there is \(H_f\in\cH\) such that
\[
        B_1-e+f\subseteq H_f.
\]
All the \(r\)-sets \(B_1-e+f\) contain the common \((r-1)\)-set \(B_1-e\). Since two
distinct members of \(\cH\) cannot contain the same \((r-1)\)-subset, all \(H_f\) are equal
to a single member \(H_0\in\cH\). Hence \(H_0\) contains \(B_1-e\) and every element of
\(B_2\setminus B_1\). Since
\[
        B_2=(B_1\cap B_2)\cup(B_2\setminus B_1)
        \subseteq (B_1-e)\cup(B_2\setminus B_1),
\]
we get \(B_2\subseteq H_0\), contradicting \(B_2\in\cB\). Therefore some
\(f\in B_2\setminus B_1\) satisfies \(B_1-e+f\in\cB\). Thus \(\cB\) is the set of bases of
a rank-\(r\) matroid \(M\) on \(E\).

We next show that \(M\) is paving. Let \(Y\in\binom E{r-1}\). If \(Y\) is not contained in
any member of \(\cH\), then for any \(x\in E\setminus Y\), the set \(Y+x\) is not contained
in any member of \(\cH\), and hence \(Y+x\in\cB\). If \(Y\subseteq H\) for some
\(H\in\cH\), then such an \(H\) is unique. Since \(H\) is proper, choose
\(x\in E\setminus H\). Then \(Y+x\) is not contained in \(H\), and it is not contained in
any other member of \(\cH\), since otherwise that other member would contain \(Y\). Hence
\(Y+x\in\cB\). Thus every \((r-1)\)-subset of \(E\) is contained in a basis. It follows
that every set of size at most \(r-1\) is independent, because any such set extends to an
\((r-1)\)-subset of \(E\), and that \((r-1)\)-subset is contained in a basis. Therefore
every circuit has size at least \(r\), so \(M\) is paving.

Now let \(H\in\cH\). No \(r\)-subset of \(H\) is a basis, so \(r_M(H)\le r-1\). Since every
\((r-1)\)-subset is independent and \(|H|\ge r\), we have \(r_M(H)=r-1\). If
\(x\in E\setminus H\), choose any \((r-1)\)-subset \(Y\subseteq H\). By the preceding
paragraph, \(Y+x\) is a basis. Hence \(r_M(H\cup\{x\})=r\). Therefore \(H\) is a
hyperplane of \(M\).

Conversely, let \(F\) be a hyperplane of \(M\) with \(|F|\ge r\). Since \(F\) has rank
\(r-1\), no \(r\)-subset of \(F\) is a basis. Hence every \(r\)-subset of \(F\) is contained
in some member of \(\cH\). The Johnson graph on \(\binom Fr\) is connected whenever
\(|F|\ge r\). Thus any two \(r\)-subsets of \(F\) can be joined by a sequence of
\(r\)-subsets of \(F\) in which consecutive sets intersect in exactly \(r-1\) elements.
Consecutive sets in such a sequence must be contained in the same member of \(\cH\), since
two distinct members of \(\cH\) cannot share an \((r-1)\)-subset. Therefore all \(r\)-subsets
of \(F\) are contained in one fixed member \(H_0\in\cH\). Since every element of \(F\) lies
in some \(r\)-subset of \(F\), we have \(F\subseteq H_0\). But \(H_0\) is a hyperplane by
the preceding paragraph, and \(F\) is also a hyperplane. Since hyperplanes are maximal
proper flats of rank \(r-1\), the inclusion \(F\subseteq H_0\) implies \(F=H_0\).
\end{proof}

\section{One large hyperplane}

Fix integers \(n,r,t\) with
\[
        2\le r<n,
        \qquad
        1\le t\le n-r-1.
\]
Let \(E=[n]\), and fix \(H\subseteq E\) with \(|H|=r+t\). Define
\[
        V_H=
        \left\{X\in\binom Er: |X\cap H|\le r-2\right\}.
\]
Thus \(V_H\) consists of those \(r\)-sets that do not share an \((r-1)\)-subset with \(H\).

Let \(I\) be a stable set in \(J(E,r)[V_H]\). Then \(\{H\}\cup I\) satisfies the
pairwise-intersection condition in Lemma \ref{lem:hyperplane-construction}. We shall also
need the following elementary nonemptiness observation.

\begin{lemma}\label{lem:nonempty-one-H}
Let \(E=[n]\), let \(2\le r<n\), and let \(H\subseteq E\) have size \(r+t\), where
\(1\le t\le n-r-1\). If \(I\subseteq V_H\), then at least one \(r\)-subset of \(E\) is not
contained in any member of \(\{H\}\cup I\).
\end{lemma}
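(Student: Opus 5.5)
Members of \(I\) are \(r\)-sets, so an \(r\)-subset \(B\) is contained in a member \(X\in I\) only if \(B=X\). Therefore it suffices to exhibit an \(r\)-set \(B\subseteq E\) such that \(B\not\subseteq H\) and \(B\notin I\). I would choose \(B\) so that \(|B\cap H|=r-1\). Such a \(B\) is automatically not a subset of \(H\). Also, \(B\notin V_H\) because \(|B\cap H|=r-1>r-2\), and since \(I\subseteq V_H\) this gives \(B\notin I\).

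The only remaining point is that such a \(B\) exists. Since \(|H|=r+t\ge r+1>r-1\), we can pick an \((r-1)\)-subset \(Y\subseteq H\). Since \(t\le n-r-1\), we have \(|H|\le n-1\), so \(E\setminus H\) is nonempty, and we can pick \(x\in E\setminus H\). Then \(B=Y+x\) has size \(r\), and \(|B\cap H|=r-1\). Here \(r\ge 2\) is not really needed beyond \(r-1\ge 1\); even for \(r-1=0\) the choice \(B=\{x\}\) works.

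There is no genuine obstacle in this argument. The only thing to be careful about is the observation that containment of an \(r\)-set in a member of \(I\) means equality. Beyond that, we only need the hypothesis \(t\le n-r-1\), which guarantees that \(E\setminus H\neq\varnothing\).
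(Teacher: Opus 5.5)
Your proposal is correct and is essentially the paper's own proof. Like the paper, you take \(B=Y\cup\{x\}\) with \(Y\) an \((r-1)\)-subset of \(H\) and \(x\in E\setminus H\), observe that \(B\not\subseteq H\) and \(|B\cap H|=r-1\) (so \(B\notin V_H\), hence \(B\notin I\)), and use that an \(r\)-set contained in an \(r\)-set must equal it.
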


\begin{proof}
Choose \(Y\subseteq H\) with \(|Y|=r-1\), and choose \(x\in E\setminus H\). This is possible
because \(H\ne E\). Put
\[
        B=Y\cup\{x\}.
\]
Then \(B\not\subseteq H\). Also \(B\notin V_H\), since \(|B\cap H|=r-1\). If
\(I\subseteq V_H\), then no member of \(I\) is equal to \(B\). Since every member of \(I\)
has size \(r\), the assertion \(B\subseteq X\) for \(X\in I\) would force \(B=X\), which is
impossible. Hence \(B\) is not contained in any member of \(\{H\}\cup I\).
\end{proof}

Thus Lemma \ref{lem:hyperplane-construction} gives a rank-\(r\) paving matroid from every
stable set \(I\) in \(J(E,r)[V_H]\). Since \(H\) is a hyperplane of size greater than \(r\),
this matroid is not sparse paving by the observation in the introduction.

\begin{theorem}\label{thm:induced-count}
Let \(E=[n]\), let \(2\le r<n\), and let \(1\le t\le n-r-1\). Fix \(H\subseteq E\) with
\(|H|=r+t\). Let \(q_{n,r,t}(H)\) denote the number of rank-\(r\) paving matroids \(M\) on
\(E\) such that \(H\) is a hyperplane of \(M\) and \(M\) is not sparse paving. Then
\[
        q_{n,r,t}(H)\ge i(J(n,r)[V_H]).
\]
If \(V_H\ne\varnothing\), then
\[
        q_{n,r,t}(H)\ge sp_{n,r}^{|V_H|/\binom nr}.
\]
\end{theorem}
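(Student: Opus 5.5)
The plan is to build an explicit injection from the stable sets of \(J(n,r)[V_H]\) into the set of matroids counted by \(q_{n,r,t}(H)\). Then I will lower-bound \(i(J(n,r)[V_H])\) using Lemma \ref{lem:PvdP-induced}.

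Fix a stable set \(I\) of \(J(n,r)[V_H]\), the empty set included, and put \(\cH_I=\{H\}\cup I\). First I check the hypotheses of Lemma \ref{lem:hyperplane-construction}.
\begin{enumerate}[label=(\roman*)]
\item Every member of \(\cH_I\) has size between \(r\) and \(n-1\), because \(|H|=r+t\le n-1\) and members of \(I\) are \(r\)-sets.
\item Two distinct members of \(I\) are nonadjacent in \(J(n,r)\), so they meet in at most \(r-2\) elements.
\item Each \(X\in I\) lies in \(V_H\), so \(|X\cap H|\le r-2\).
\item Lemma \ref{lem:nonempty-one-H} supplies an \(r\)-set contained in no member of \(\cH_I\).
\end{enumerate}
Hence \(\cB_I=\{B\in\binom Er: B\not\subseteq K \text{ for all } K\in\cH_I\}\) is the basis family of a rank-\(r\) paving matroid \(M_I\). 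Its hyperplanes of size at least \(r\) are exactly the members of \(\cH_I\). In particular \(H\) is a hyperplane of \(M_I\), and since \(|H|>r\), the observation in the introduction shows \(M_I\) is not sparse paving. So \(M_I\) is counted by \(q_{n,r,t}(H)\).

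Next I show \(I\mapsto M_I\) is injective. By the last clause of Lemma \ref{lem:hyperplane-construction}, \(I\) is recovered from \(M_I\) as the set of hyperplanes of \(M_I\) of size exactly \(r\), since \(H\) is the only member of \(\cH_I\) of larger size. Thus distinct stable sets give distinct matroids, and \(q_{n,r,t}(H)\ge i(J(n,r)[V_H])\).

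For the second inequality, apply Lemma \ref{lem:PvdP-induced} with \(G=J(n,r)\), which is vertex-transitive because \(\mathrm{Sym}([n])\) acts transitively on \(r\)-sets, and with \(U=V_H\ne\varnothing\). This gives \(i(J(n,r)[V_H])\ge i(J(n,r))^{|V_H|/\binom nr}\). By Lemma \ref{lem:sparse-paving-stable}, \(i(J(n,r))=sp_{n,r}\), which gives the stated bound.

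I see no serious obstacle. The only point needing care is the injectivity step: the recovery of \(I\) depends on the "exactly the members of \(\cH\)" clause of Lemma \ref{lem:hyperplane-construction} and on the size distinction \(|H|>r\).
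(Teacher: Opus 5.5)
Your proposal is correct and matches the paper's proof step for step. Both build the injection \(I\mapsto M_I\) through Lemma \ref{lem:hyperplane-construction}, with nonemptiness supplied by Lemma \ref{lem:nonempty-one-H}, and recover \(I\) as the hyperplanes of size exactly \(r\); both then apply Lemma \ref{lem:PvdP-induced} to the vertex-transitive graph \(J(n,r)\) with \(U=V_H\), using \(i(J(n,r))=sp_{n,r}\). Your explicit check of hypotheses (i)--(iv) is slightly more detailed than the paper, which states the pairwise-intersection condition just before the theorem rather than inside its proof.
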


\begin{proof}
Every stable set \(I\) of \(J(n,r)[V_H]\) gives, by Lemmas
\ref{lem:nonempty-one-H} and \ref{lem:hyperplane-construction}, a rank-\(r\) paving
matroid \(M_I\) in which \(H\) is a hyperplane. Since \(|H|=r+t>r\), each such matroid is
not sparse paving.

We show that distinct stable sets give distinct matroids. By Lemma
\ref{lem:hyperplane-construction}, the hyperplanes of \(M_I\) having size at least \(r\) are
exactly the members of \(\{H\}\cup I\). Since \(H\) has size \(r+t>r\) and every member of
\(I\) has size exactly \(r\), the set \(I\) is recoverable from \(M_I\) as the collection of
hyperplanes of size exactly \(r\). Thus the map \(I\mapsto M_I\) is injective, and
\[
        q_{n,r,t}(H)\ge i(J(n,r)[V_H]).
\]

If \(V_H\ne\varnothing\), then we may apply Lemma \ref{lem:PvdP-induced} to the
vertex-transitive graph \(G=J(n,r)\) and the nonempty subset \(U=V_H\). Since
\(i(J(n,r))=sp_{n,r}\) by Lemma \ref{lem:sparse-paving-stable}, we obtain
\[
        i(J(n,r)[V_H])
        \ge
        i(J(n,r))^{|V_H|/\binom nr}
        =
        sp_{n,r}^{|V_H|/\binom nr}.
\]
\end{proof}

\begin{lemma}\label{lem:VH-size}
With \(E=[n]\), \(2\le r<n\), \(1\le t\le n-r-1\), and \(|H|=r+t\), we have
\[
        |V_H|=
        \binom nr
        -\binom{r+t}{r}
        -\binom{r+t}{r-1}(n-r-t).
\]
\end{lemma}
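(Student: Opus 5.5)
We count the complement of \(V_H\) in \(\binom Er\). By definition, an \(r\)-set \(X\) fails to lie in \(V_H\) exactly when \(|X\cap H|\ge r-1\). Since \(|X|=r\), this means \(|X\cap H|\in\{r-1,r\}\). So we partition \(\binom Er\setminus V_H\) into two disjoint classes according to the value of \(|X\cap H|\) and count each class separately.

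\emph{The class \(|X\cap H|=r\).} Here \(X\subseteq H\). These sets are exactly the \(r\)-subsets of \(H\), so there are \(\binom{r+t}{r}\) of them.

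\emph{The class \(|X\cap H|=r-1\).} Such an \(X\) is determined by two independent choices:
\begin{itemize}
\item the \((r-1)\)-set \(X\cap H\subseteq H\), which can be chosen in \(\binom{r+t}{r-1}\) ways;
\item the single element of \(X\setminus H\), which lies in \(E\setminus H\), a set of size \(n-r-t\).
\end{itemize}
This gives \(\binom{r+t}{r-1}(n-r-t)\) sets, and the map \(X\mapsto (X\cap H, X\setminus H)\) is a bijection onto these pairs.

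Subtracting both counts from \(\binom nr\) yields the stated formula. The hypotheses \(t\ge1\) and \(t\le n-r-1\) ensure \(r\le|H|<n\), so all the binomial coefficients above are the natural counts. Nothing here is delicate; the only point requiring care is to note that \(|X\cap H|\le r\) always holds, so the two classes exhaust the complement.
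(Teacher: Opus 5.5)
Your proof is correct and is essentially the same as the paper's. Both count the complement of \(V_H\) by splitting it into the disjoint, exhaustive cases \(X\subseteq H\), which gives \(\binom{r+t}{r}\) sets, and \(|X\cap H|=r-1\), which gives \(\binom{r+t}{r-1}(n-r-t)\) sets.
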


\begin{proof}
The vertices excluded from \(V_H\) are exactly the \(r\)-subsets \(X\subseteq E\) satisfying
\(|X\cap H|\ge r-1\). Since \(|X|=r\), either \(X\subseteq H\), giving
\(\binom{r+t}{r}\) choices, or \(|X\cap H|=r-1\) and \(X\) contains one element outside
\(H\), giving \(\binom{r+t}{r-1}(n-r-t)\) choices. These two cases are disjoint and
exhaustive.
\end{proof}

\begin{corollary}\label{cor:explicit-r-t}
Let \(E=[n]\), let \(2\le r<n\), and let \(1\le t\le n-r-1\). Fix \(H\subseteq E\) with
\(|H|=r+t\). If \(V_H\ne\varnothing\), then
\[
        q_{n,r,t}(H)
        \ge
        sp_{n,r}^{
        1-
        \frac{\binom{r+t}{r}+\binom{r+t}{r-1}(n-r-t)}{\binom nr}}
        .
\]
\end{corollary}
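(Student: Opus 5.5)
This corollary follows by combining Theorem \ref{thm:induced-count} with the exact count of Lemma \ref{lem:VH-size}. Fix \(H\) with \(|H|=r+t\) and assume \(V_H\ne\varnothing\). The second part of Theorem \ref{thm:induced-count} gives
\[
        q_{n,r,t}(H)\ge sp_{n,r}^{|V_H|/\binom nr}.
\]
The task is therefore to rewrite the exponent \(|V_H|/\binom nr\) in the stated form.

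Divide the identity of Lemma \ref{lem:VH-size} by \(\binom nr\). This gives
\[
        \frac{|V_H|}{\binom nr}
        =1-\frac{\binom{r+t}{r}+\binom{r+t}{r-1}(n-r-t)}{\binom nr}.
\]
Substituting this expression into the exponent yields exactly the displayed bound.

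The only point needing care is that no monotonicity argument in the base is required. The exponent is an equality, not merely an inequality, so the bound is transferred verbatim. For the same reason, the case \(sp_{n,r}<1\) cannot occur: \(sp_{n,r}\ge 1\) always, since the empty stable set counts. There is no real obstacle; the hypothesis \(V_H\ne\varnothing\) is needed only because it is required to invoke Lemma \ref{lem:PvdP-induced} inside Theorem \ref{thm:induced-count}.
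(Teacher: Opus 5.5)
Your proof is correct and is the same as the paper's. The paper also obtains the corollary immediately, by dividing the exact count in Lemma \ref{lem:VH-size} by \(\binom nr\) and substituting the result into the exponent \(|V_H|/\binom nr\) from the second part of Theorem \ref{thm:induced-count}.
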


\begin{proof}
This follows immediately from Theorem \ref{thm:induced-count} and Lemma
\ref{lem:VH-size}.
\end{proof}

\begin{proof}[Proof of Theorem \ref{thm:main}]
Take \(r=\lfloor n/2\rfloor\) and \(t=1\). For all sufficiently large \(n\), we have
\(1\le t\le n-r-1\) and \(V_H\ne\varnothing\). Fix \(H\subseteq[n]\) with \(|H|=r+1\). By
Corollary \ref{cor:explicit-r-t},
\[
        p_n-sp_n
        \ge
        q_{n,r,1}(H)
        \ge
        sp_{n,r}^{
        1-
        \frac{(r+1)+\binom{r+1}{2}(n-r-1)}{\binom nr}}
        .
\]
The numerator is \(O(n^3)\), while
\[
        \binom nr=\binom n{\lfloor n/2\rfloor}\sim \frac{2^n}{\sqrt{\pi n/2}}.
\]
Therefore the displayed fraction is \(o(1)\), and
\[
        p_n-sp_n\ge sp_{n,\lfloor n/2\rfloor}^{1-o(1)}.
\]
Taking logarithms gives the equivalent logarithmic form.
\end{proof}

\begin{proof}[Proof of Theorem \ref{thm:amplified-one}]
Apply Corollary \ref{cor:explicit-r-t} with \(t=1\). For each fixed \((r+1)\)-subset
\(H\subseteq[n]\), the construction gives at least \(sp_{n,r}^{1-\delta_n}\) rank-\(r\)
paving matroids on \([n]\) that are not sparse paving and in which \(H\) is a hyperplane.

The families obtained from distinct choices of \(H\) are disjoint. Indeed, if \(M\) is
produced from a stable set \(I\subseteq V_H\), then Lemma
\ref{lem:hyperplane-construction} says that the hyperplanes of \(M\) having size at least
\(r\) are exactly \(\{H\}\cup I\). Here \(H\) has size \(r+1\), while every member of \(I\)
has size exactly \(r\). Hence \(H\) is uniquely recoverable from \(M\) as the unique
hyperplane of size \(r+1\). Therefore no matroid produced from \(H\) can also be produced
from a different \((r+1)\)-subset \(H'\).

There are \(\binom n{r+1}\) choices for \(H\). Hence
\[
        p_n-sp_n\ge \binom n{r+1}sp_{n,r}^{1-\delta_n}.
\]
Finally, for \(r=\lfloor n/2\rfloor\), \(\delta_n=o(1)\), by the same estimate used in the
proof of Theorem \ref{thm:main}.
\end{proof}

\begin{proof}[Proof of Corollary \ref{cor:GS-scale}]
Graham and Sloane \cite{GS80} proved that
\[
        sp_{n,r}\ge 2^{\binom nr/n}
\]
for every \(0<r<n\). Applying this with \(r=\lfloor n/2\rfloor\), Theorem
\ref{thm:main} gives
\[
        p_n-sp_n
        \ge
        \left(2^{\binom n{\lfloor n/2\rfloor}/n}\right)^{1-o(1)}
        =
        2^{\frac{1-o(1)}{n}\binom n{\lfloor n/2\rfloor}}.
\]
\end{proof}

\begin{remark}\label{rem:not-ratio}
Theorem \ref{thm:main} is a logarithmic-scale lower bound. It does not imply that
\(sp_n/p_n\to0\). A ratio-scale disproof of the Mayhew--Newman--Welsh--Whittle conjecture
by this method would require substantially sharper information about how many stable sets
of \(J(n,r)\) remain after deleting the local forbidden neighborhood of a large hyperplane.
\end{remark}

\section{Multiple large hyperplanes}

We now describe a second way to push the construction. Instead of prescribing one large
hyperplane, we prescribe a family of large hyperplanes. The price is that these large
hyperplanes must satisfy a stronger packing condition.

Let \(G_{n,r+1}^{(6)}\) be the graph whose vertices are the \((r+1)\)-subsets of \([n]\),
with two distinct vertices \(C_1,C_2\) adjacent if
\[
        |C_1\cap C_2|\ge r-1.
\]
Thus a stable set in \(G_{n,r+1}^{(6)}\) is precisely a family
\(\cC\subseteq\binom{[n]}{r+1}\) satisfying
\[
        |C_1\cap C_2|\le r-2
        \qquad(C_1\ne C_2).
\]
Equivalently, \(\cC\) is a constant-weight code of length \(n\), weight \(r+1\), and
minimum Hamming distance at least \(6\).

\begin{theorem}[Distance-six large-hyperplane construction]\label{thm:distance-six}
Let \(2\le r\le n-3\). Then
\[
        p_n-sp_n\ge i(G_{n,r+1}^{(6)})-1.
\]
In particular,
\[
        p_n-sp_n\ge 2^{\alpha(G_{n,r+1}^{(6)})}-1.
\]
Moreover,
\[
        p_n-sp_n
        \ge
        2^{
        \displaystyle
        \frac{\binom n{r+1}}
        {1+(r+1)(n-r-1)+\binom{r+1}{2}\binom{n-r-1}{2}}
        }
        -1.
\]
\end{theorem}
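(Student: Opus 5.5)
For every nonempty stable set \(\cC\) of \(G_{n,r+1}^{(6)}\), I will build the matroid \(M_\cC\) from Lemma \ref{lem:hyperplane-construction}, taking \(E=[n]\) and \(\cH=\cC\). Each member has size \(r+1\). This lies in \([r,n-1]\) because \(r\le n-3\). Pairwise intersections have size at most \(r-2\) by the definition of stable sets in \(G_{n,r+1}^{(6)}\).

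For nonemptiness of \(\cB\), I pick any \(C\in\cC\), an \((r-1)\)-set \(Y\subseteq C\), and \(x\notin C\), and set \(B=Y+x\). Then \(B\not\subseteq C\). If \(B\subseteq C'\) for some other \(C'\in\cC\), then \(|C\cap C'|\ge r-1\), which is a contradiction. So \(M_\cC\) is a rank-\(r\) paving matroid whose hyperplanes of size at least \(r\) are exactly the members of \(\cC\). Since \(\cC\ne\varnothing\), \(M_\cC\) has a hyperplane of size \(r+1>r\), so it is not sparse paving by the introduction's observation. Injectivity is immediate, because \(\cC\) is recovered as the set of hyperplanes of size at least \(r\). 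Counting nonempty stable sets gives \(p_n-sp_n\ge i(G_{n,r+1}^{(6)})-1\).

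The second bound follows because every subset of a maximum stable set is stable, so \(i(G)\ge 2^{\alpha(G)}\).

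For the third bound I will use the standard greedy (Caro--Wei type) estimate \(\alpha(G)\ge |V(G)|/(\Delta(G)+1)\). The graph \(G_{n,r+1}^{(6)}\) has \(\binom n{r+1}\) vertices. I will count the closed neighbourhood of \(C\), meaning the sets \(C'\) of size \(r+1\) with \(|C\cap C'|\ge r-1\), by the value of \(|C\setminus C'|\):
\begin{itemize}
\item \(|C\setminus C'|=0\): only \(C'=C\), contributing \(1\);
\item \(|C\setminus C'|=1\): contributing \((r+1)(n-r-1)\);
\item \(|C\setminus C'|=2\): contributing \(\binom{r+1}2\binom{n-r-1}2\).
\end{itemize}
The sum is exactly the displayed denominator, which equals \(\Delta+1\). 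Substituting into \(2^{\alpha}-1\) gives the final display.

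No step is a real obstacle. The only care needed is verifying the nonemptiness hypothesis and the size range \(|H|\le n-1\) in Lemma \ref{lem:hyperplane-construction}, and getting the neighbourhood count right.
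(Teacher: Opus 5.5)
Your proof is correct and follows the paper's argument. You apply Lemma \ref{lem:hyperplane-construction} with $\cH=\cC$, recover $\cC$ as the hyperplanes of size at least $r$ to get injectivity, use $i(G)\ge 2^{\alpha(G)}$, and finish with the greedy bound using the same neighbourhood count. The only difference is the nonemptiness check: you exhibit the explicit witness $B=Y+x$, which is the argument the paper itself uses in Theorem \ref{thm:hybrid}, whereas the paper here uses the counting bound $(r+1)|\cC|\le \frac{2}{r}\binom n{r-1}<\binom nr$. Both are valid.
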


\begin{proof}
Let \(\varnothing\ne\cC\) be a stable set in \(G_{n,r+1}^{(6)}\). Then every two distinct
members of \(\cC\) meet in at most \(r-2\) elements. We first check the nonemptiness
hypothesis of Lemma \ref{lem:hyperplane-construction}. No \((r-1)\)-subset of \([n]\) can
be contained in two distinct members of \(\cC\), since that would force those two members to
intersect in at least \(r-1\) elements. Hence
\[
        |\cC|\binom{r+1}{r-1}\le \binom n{r-1}.
\]
Also, no \(r\)-subset is contained in two distinct members of \(\cC\). Thus the number of
\(r\)-subsets contained in members of \(\cC\) is exactly \((r+1)|\cC|\), and therefore at
most
\[
        (r+1)\frac{\binom n{r-1}}{\binom{r+1}{2}}
        =
        \frac{2}{r}\binom n{r-1}.
\]
Since
\[
        \binom nr=\frac{n-r+1}{r}\binom n{r-1}
\]
and \(r\le n-3\), we have \(n-r+1\ge4\). Therefore the number of covered \(r\)-subsets is
strictly smaller than \(\binom nr\). Hence at least one \(r\)-subset is not contained in any
member of \(\cC\).

By Lemma \ref{lem:hyperplane-construction}, the family \(\cC\) defines a rank-\(r\) paving
matroid whose hyperplanes of size at least \(r\) are exactly the members of \(\cC\). Since
\(\cC\ne\varnothing\) and every member of \(\cC\) has size \(r+1\), the matroid is not
sparse paving. The map \(\cC\mapsto M(\cC)\) is injective, because \(\cC\) is recoverable
as the set of hyperplanes of size \(r+1\). This gives
\[
        p_n-sp_n\ge i(G_{n,r+1}^{(6)})-1.
\]
Since every subset of a stable set is stable,
\[
        i(G_{n,r+1}^{(6)})\ge 2^{\alpha(G_{n,r+1}^{(6)})}.
\]

It remains to estimate the independence number. The graph \(G_{n,r+1}^{(6)}\) has
\(\binom n{r+1}\) vertices. Fix a vertex \(C\). A distinct vertex \(D\) is adjacent to
\(C\) if either \(|C\cap D|=r\) or \(|C\cap D|=r-1\). The number of such \(D\) with
\(|C\cap D|=r\) is \((r+1)(n-r-1)\), and the number with \(|C\cap D|=r-1\) is
\(\binom{r+1}{2}\binom{n-r-1}{2}\). Thus
\[
        \Delta(G_{n,r+1}^{(6)})
        =
        (r+1)(n-r-1)+\binom{r+1}{2}\binom{n-r-1}{2}.
\]
The greedy bound \(\alpha(G)\ge |V(G)|/(\Delta(G)+1)\) gives the claimed explicit lower
bound.
\end{proof}

\begin{corollary}\label{cor:distance-six-middle}
For \(r=\lfloor n/2\rfloor\),
\[
        p_n-sp_n
        \ge
        2^{\Omega\left(\binom n{\lfloor n/2\rfloor}/n^4\right)}.
\]
\end{corollary}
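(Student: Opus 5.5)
The plan is to specialize the explicit bound in Theorem \ref{thm:distance-six} to \(r=\lfloor n/2\rfloor\) and estimate the exponent. First, for all sufficiently large \(n\) we have \(2\le r\le n-3\), so the theorem applies and gives
\[
        p_n-sp_n\ge 2^{N/D}-1,
        \qquad
        N=\binom n{r+1},\quad
        D=1+(r+1)(n-r-1)+\binom{r+1}{2}\binom{n-r-1}{2}.
\]

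The next step is to compare \(N\) with the central binomial coefficient. For \(r=\lfloor n/2\rfloor\),
\[
        \binom n{r+1}=\frac{n-r}{r+1}\binom nr,
\]
and the ratio \((n-r)/(r+1)\) tends to \(1\). Hence \(N\ge c\binom n{\lfloor n/2\rfloor}\) for some absolute \(c>0\) and all large \(n\).

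Then I bound the denominator. Since \(r+1\le n\) and \(n-r-1\le n\), we get \(\binom{r+1}{2}\binom{n-r-1}{2}\le n^4/4\) and \((r+1)(n-r-1)\le n^2\). So \(D\le n^4\) for large \(n\). It follows that
\[
        \frac ND\ge c\,\frac{\binom n{\lfloor n/2\rfloor}}{n^4}.
\]

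The only remaining point, mildly delicate, is absorbing the \(-1\). The exponent \(N/D\) tends to infinity, because \(\binom n{\lfloor n/2\rfloor}\) grows like \(2^n/\sqrt n\). Thus \(2^{N/D}-1\ge 2^{N/D-1}\ge 2^{(c/2)\binom n{\lfloor n/2\rfloor}/n^4}\) for all large \(n\). This is the claimed \(2^{\Omega(\binom n{\lfloor n/2\rfloor}/n^4)}\) bound. Small values of \(n\) do not matter for an asymptotic \(\Omega\) statement.

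I expect no genuine obstacle beyond the routine checks above: the hypothesis \(r\le n-3\), the constant in \(N/\binom nr\), and the \(-1\).
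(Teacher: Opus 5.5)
Your proposal is correct and takes essentially the same route as the paper. You specialize the explicit bound of Theorem~\ref{thm:distance-six} to \(r=\lfloor n/2\rfloor\), use that \(\binom n{r+1}\) is within a constant factor of \(\binom n{\lfloor n/2\rfloor}\), and bound the denominator by \(O(n^4)\). Your explicit checks — that \(2\le r\le n-3\) holds for large \(n\), and that the \(-1\) can be absorbed because the exponent tends to infinity — are details the paper leaves implicit.
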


\begin{proof}
For \(r=\lfloor n/2\rfloor\), the denominator in Theorem \ref{thm:distance-six} is
\(O(n^4)\), while \(\binom n{r+1}\) is within a constant factor of
\(\binom n{\lfloor n/2\rfloor}\). The result follows.
\end{proof}

For a stable set \(\cC\) in \(G_{n,r+1}^{(6)}\), define
\[
        V_{\cC}=
        \left\{
        X\in\binom{[n]}r:
        |X\cap C|\le r-2 \text{ for every } C\in\cC
        \right\}.
\]

\begin{theorem}[Hybrid large-hyperplane/stable-set construction]\label{thm:hybrid}
Let \(2\le r\le n-3\). Then
\[
        p_n-sp_n
        \ge
        \sum_{\varnothing\ne\cC}
        i\bigl(J(n,r)[V_{\cC}]\bigr),
\]
where the sum is over all nonempty stable sets \(\cC\) of \(G_{n,r+1}^{(6)}\). Consequently,
\[
        p_n-sp_n
        \ge
        \sum_{\substack{\varnothing\ne\cC\\ V_{\cC}\ne\varnothing}}
        sp_{n,r}^{|V_{\cC}|/\binom nr},
\]
again with \(\cC\) ranging over stable sets of \(G_{n,r+1}^{(6)}\).
\end{theorem}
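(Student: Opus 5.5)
The plan is to combine the two constructions already in the paper: the code \(\cC\) supplies the large hyperplanes, and a stable set supplies the rank-sized hyperplanes. Concretely, for each nonempty stable set \(\cC\) of \(G_{n,r+1}^{(6)}\) and each stable set \(I\) of \(J(n,r)[V_{\cC}]\), I will build a single rank-\(r\) paving matroid \(M_{\cC,I}\) from the family \(\cH=\cC\cup I\) via Lemma~\ref{lem:hyperplane-construction}. Then I will show that the assignment \((\cC,I)\mapsto M_{\cC,I}\) is injective into the set of non-sparse paving matroids on \([n]\).

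\textbf{Hypotheses of Lemma~\ref{lem:hyperplane-construction}.} First, every member of \(\cH\) has size \(r\) or \(r+1\le n-2\), so it is a proper subset of size at least \(r\). Next, the pairwise condition \(|H_1\cap H_2|\le r-2\) splits into three cases:
\begin{itemize}
\item two members of \(\cC\) satisfy it because \(\cC\) is stable in \(G_{n,r+1}^{(6)}\);
\item two members of \(I\) satisfy it because they are nonadjacent distinct \(r\)-sets;
\item \(C\in\cC\) and \(X\in I\) satisfy it because \(X\in V_{\cC}\).
\end{itemize}
The remaining hypothesis is that some \(r\)-set avoids every member of \(\cH\). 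Lemma~\ref{lem:nonempty-one-H} handled this for one hyperplane, and I will adapt its argument. Fix \(C_0\in\cC\), an \((r-1)\)-set \(Y\subseteq C_0\), and \(x\notin C_0\), and put \(B=Y+x\).
\begin{itemize}
\item \(B\not\subseteq C_0\) by the choice of \(x\).
\item \(B\) is not contained in any other \(C\in\cC\), since then \(|C\cap C_0|\ge r-1\), contradicting stability of \(\cC\).
\item \(B\notin I\), since \(|B\cap C_0|=r-1\) gives \(B\notin V_{\cC}\).
\end{itemize}
Hence \(B\) is not contained in any member of \(\cH\).

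\textbf{Non-sparseness and injectivity.} By Lemma~\ref{lem:hyperplane-construction}, the hyperplanes of \(M_{\cC,I}\) of size at least \(r\) are exactly \(\cC\cup I\). Since \(\cC\ne\varnothing\), the matroid has a hyperplane of size \(r+1>r\), so it is not sparse paving by the observation in the introduction. From the matroid we recover \(\cC\) as its hyperplanes of size \(r+1\) and \(I\) as its hyperplanes of size \(r\). Therefore distinct pairs \((\cC,I)\) give distinct matroids. Summing over all pairs yields the first inequality.

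\textbf{Second inequality.} For each \(\cC\) with \(V_{\cC}\ne\varnothing\), apply Lemma~\ref{lem:PvdP-induced} to the vertex-transitive graph \(J(n,r)\) with \(U=V_{\cC}\), and use \(i(J(n,r))=sp_{n,r}\) from Lemma~\ref{lem:sparse-paving-stable}. Terms with \(V_{\cC}=\varnothing\) are nonnegative and can simply be dropped.

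The main point needing care is the nonemptiness hypothesis with several large hyperplanes. The counting argument from Theorem~\ref{thm:distance-six} does not directly account for the members of \(I\), which is why I use the explicit witness \(B\) above. That argument needs \(I\subseteq V_{\cC}\) and the fact that \(C_0\) is not all of \([n]\).
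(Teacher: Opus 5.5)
Your proposal is correct and follows essentially the same route as the paper's proof. The shared steps are: the three-case check of the pairwise-intersection condition; the explicit witness \(B=Y\cup\{x\}\) with \(Y\subseteq C_0\in\cC\) for nonemptiness; recovery of \(\cC\) and \(I\) as the hyperplanes of size \(r+1\) and \(r\), which gives injectivity; and Lemma~\ref{lem:PvdP-induced} applied to \(U=V_{\cC}\) for the second inequality.
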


\begin{proof}
Fix a nonempty stable set \(\cC\) of \(G_{n,r+1}^{(6)}\), and let \(I\) be a stable set in
\(J(n,r)[V_{\cC}]\). The family \(\cC\cup I\) satisfies the pairwise-intersection condition
in Lemma \ref{lem:hyperplane-construction}: two members of \(\cC\) meet in at most \(r-2\)
elements; two members of \(I\) meet in at most \(r-2\) elements; and if \(C\in\cC\) and
\(X\in I\), then \(|C\cap X|\le r-2\) by the definition of \(V_{\cC}\).

We check the nonemptiness hypothesis. Choose \(C\in\cC\), choose \(Y\subseteq C\) with
\(|Y|=r-1\), and choose \(x\in[n]\setminus C\). Such an \(x\) exists because
\(|C|=r+1\le n-2\). Put \(B=Y\cup\{x\}\). Then \(B\not\subseteq C\). If \(B\subseteq C'\)
for some \(C'\in\cC\) with \(C'\ne C\), then \(|C\cap C'|\ge |Y|=r-1\), contradicting the
stability of \(\cC\). Hence \(B\) is not contained in any member of \(\cC\). Also,
\(|B\cap C|=r-1\), so \(B\notin V_{\cC}\). Since \(I\subseteq V_{\cC}\) and every member of
\(I\) has size \(r\), the set \(B\) is not contained in any member of \(I\). Therefore
\(B\) is not contained in any member of \(\cC\cup I\).

Lemma \ref{lem:hyperplane-construction} now gives a rank-\(r\) paving matroid
\(M(\cC,I)\). It is not sparse paving because \(\cC\ne\varnothing\), so the matroid has a
hyperplane of size \(r+1\). The map \((\cC,I)\mapsto M(\cC,I)\) is injective: by Lemma
\ref{lem:hyperplane-construction}, the hyperplanes of size at least \(r\) are exactly
\(\cC\cup I\), and \(\cC\) is the subfamily of size \(r+1\), while \(I\) is the subfamily of
size \(r\). Hence both \(\cC\) and \(I\) are recoverable from the matroid. Summing over all
nonempty \(\cC\) gives the first inequality.

If \(V_{\cC}\ne\varnothing\), then Lemma \ref{lem:PvdP-induced} gives
\[
        i(J(n,r)[V_{\cC}])
        \ge
        i(J(n,r))^{|V_{\cC}|/\binom nr}
        =
        sp_{n,r}^{|V_{\cC}|/\binom nr}.
\]
Summing these inequalities proves the second assertion.
\end{proof}

\section{Ratio-scale limitations of the construction}

The preceding results give large lower bounds, but they do not by themselves disprove the
Mayhew--Newman--Welsh--Whittle conjecture. This section records the precise ratio-scale
obstruction for the one-large-hyperplane method.

Let \(r=\lfloor n/2\rfloor\), let \(H\subseteq[n]\) have size \(r+1\), and put
\[
        F_H=\binom{[n]}r\setminus V_H
        =
        \left\{X\in\binom{[n]}r: |X\cap H|\ge r-1\right\}.
\]
By Lemma \ref{lem:VH-size},
\[
        |F_H|
        =
        (r+1)+\binom{r+1}{2}(n-r-1)
        =
        \Theta(n^3).
\]
Thus the induced subgraph \(J(n,r)[V_H]\) is obtained from \(J(n,r)\) by deleting only
polynomially many vertices. The difficulty is that stable-set counts are sensitive enough
that such a deletion may still cause a large loss at the ratio scale needed here.

\begin{proposition}[A sufficient ratio-scale condition]\label{prop:sufficient-ratio}
Let \(r=\lfloor n/2\rfloor\), and let \(H\subseteq[n]\) have size \(r+1\). If
\[
        \binom n{r+1}\, i(J(n,r)[V_H])
        =
        \omega\bigl(i(J(n,r))\bigr),
\]
then the rank-\(r\) paving matroids produced by the one-large-hyperplane construction and
not sparse paving outnumber rank-\(r\) sparse paving matroids. Equivalently, since
\(i(J(n,r))=sp_{n,r}\), it is enough to prove
\[
        \frac{i(J(n,r)[V_H])}{i(J(n,r))}
        =
        \omega\left(\binom n{r+1}^{-1}\right).
\]
In particular, any lower bound of the form
\[
        \frac{i(J(n,r)[V_H])}{i(J(n,r))}
        \ge
        2^{-cn+o(n)}
\]
with a fixed constant \(c<1\) would be sufficient. More sharply, a lower bound of the form
\[
        \frac{i(J(n,r)[V_H])}{i(J(n,r))}
        \ge
        2^{-n+\omega(\log n)}
\]
would also be sufficient.
\end{proposition}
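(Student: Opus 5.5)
The plan is to combine the disjointness argument from the proof of Theorem \ref{thm:amplified-one} with the injectivity in Theorem \ref{thm:induced-count}, and then reduce everything to elementary asymptotics of \(\binom n{r+1}\). Write \(N_n\) for the number of rank-\(r\) paving, non-sparse-paving matroids produced by the one-large-hyperplane construction as \(H\) ranges over all \((r+1)\)-subsets of \([n]\).

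\textbf{Step 1: an exact count.} By Lemma \ref{lem:hyperplane-construction}, each matroid built from \(H\) and a stable set \(I\) of \(J(n,r)[V_H]\) has \(\{H\}\cup I\) as its family of hyperplanes of size at least \(r\). Hence \(H\) is recovered as the unique hyperplane of size \(r+1\), and \(I\) as the hyperplanes of size exactly \(r\). So the pair \((H,I)\) is determined by the matroid. Since \(S_n\) acts transitively on \((r+1)\)-subsets and induces graph isomorphisms \(J(n,r)[V_H]\cong J(n,r)[V_{H'}]\), the count \(i(J(n,r)[V_H])\) does not depend on \(H\). Therefore
\[
N_n=\binom n{r+1}\, i(J(n,r)[V_H]).
\]
Each such matroid is paving and not sparse paving (Lemma \ref{lem:nonempty-one-H} and the introduction). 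By Lemma \ref{lem:sparse-paving-stable}, \(sp_{n,r}=i(J(n,r))\), so the hypothesis says exactly \(N_n/sp_{n,r}\to\infty\), i.e. \(N_n=\omega(sp_{n,r})\). This is the claimed "outnumbering". Dividing both sides by \(\binom n{r+1}\,i(J(n,r))\) gives the equivalent ratio formulation.

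\textbf{Step 2: sufficiency of the explicit bounds.} It remains to show that each displayed lower bound implies \(\text{ratio}\cdot\binom n{r+1}\to\infty\). I would use the middle-binomial estimate \(\binom n{r+1}=\Theta(2^n/\sqrt n)\) for \(r=\lfloor n/2\rfloor\), i.e. \(\binom n{r+1}=2^{n-\frac12\log_2 n+O(1)}\).

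For the first bound, multiplying \(2^{-cn+o(n)}\) by \(\binom n{r+1}\) gives
\[
2^{(1-c)n-o(n)-\frac12\log_2 n+O(1)},
\]
which tends to infinity because \(c<1\) is fixed.

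For the sharper bound, multiplying \(2^{-n+\omega(\log n)}\) by \(\binom n{r+1}\) gives
\[
2^{\omega(\log n)-\frac12\log_2 n+O(1)}.
\]
Here I use that an \(\omega(\log n)\) term eventually dominates \(\tfrac12\log_2 n+O(1)\), so this also tends to infinity.

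\textbf{Main obstacle.} Nothing here is deep. The only point needing care is the exactness in Step 1: the families for different \(H\) must be disjoint and each injective, which is already established. One must also note that "outnumber" is interpreted as \(N_n/sp_{n,r}\to\infty\). Only the lower bound \(N_n\ge\binom n{r+1}\,i(J(n,r)[V_H])\) is actually needed.
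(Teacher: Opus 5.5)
Your proposal is correct and follows essentially the same route as the paper. Recoverability of \(H\) (and \(I\)) makes the families injective and disjoint, symmetry under the symmetric group makes \(i(J(n,r)[V_H])\) independent of \(H\), and the estimate \(\binom n{r+1}=2^{n-\frac12\log_2 n+O(1)}\) disposes of both explicit sufficient bounds. Your remark that the count \(\binom n{r+1}\,i(J(n,r)[V_H])\) is in fact exact is a harmless strengthening of the paper's \emph{at least}, and as you note, only the lower bound is needed.
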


\begin{proof}
For each \((r+1)\)-subset \(H'\subseteq[n]\), Theorem \ref{thm:induced-count} constructs
at least \(i(J(n,r)[V_{H'}])\) rank-\(r\) paving matroids that are not sparse paving and
have \(H'\) as a hyperplane. Since all \((r+1)\)-subsets are equivalent under the symmetric
group on \([n]\), the value \(i(J(n,r)[V_{H'}])\) is independent of \(H'\). By the
recoverability argument in the proof of Theorem \ref{thm:amplified-one}, the families
obtained from different \(H'\) are disjoint. Hence the construction produces at least
\[
        \binom n{r+1} i(J(n,r)[V_H])
\]
rank-\(r\) paving matroids that are not sparse paving. The number of rank-\(r\) sparse
paving matroids is \(sp_{n,r}=i(J(n,r))\). This proves the first two assertions.

For \(r=\lfloor n/2\rfloor\), Stirling's formula gives
\[
        \binom n{r+1}=2^{n-\frac12\log_2 n+O(1)}.
\]
Hence
\[
        \binom n{r+1}^{-1}
        =
        2^{-n+\frac12\log_2 n+O(1)}.
\]
Therefore any lower bound of the form \(2^{-cn+o(n)}\) with \(c<1\) is larger than
\(\binom n{r+1}^{-1}\) by an exponential factor, and so gives the desired
\(\omega\)-condition. Also, any lower bound of the form \(2^{-n+\omega(\log n)}\) is
larger than \(2^{-n+\frac12\log_2 n+O(1)}\) by a factor tending to infinity, and hence is
also sufficient.
\end{proof}

The next proposition records exactly what is certified by applying the
Pendavingh--van der Pol induced-subgraph lemma to the deleted-vertex subgraph \(J(n,r)[V_H]\).
It should be read as a limitation of that particular certification, not as an upper bound
on the true number of matroids produced by the construction.

\begin{proposition}[What the induced-subgraph lemma certifies]\label{prop:entropy-certified}
Let \(r=\lfloor n/2\rfloor\), and let \(H\subseteq[n]\) have size \(r+1\). The lower bound
obtained from Lemma \ref{lem:PvdP-induced} and then amplified over all choices of \(H\) is
\[
        \binom n{r+1} sp_{n,r}^{1-\delta_n},
        \qquad
        \delta_n=
        \frac{(r+1)+\binom{r+1}{2}(n-r-1)}{\binom nr}.
\]
Using only the Graham--Sloane lower bound to estimate \(sp_{n,r}\), the multiplier of
\(sp_{n,r}\) certified by this argument is at most
\[
        2^{n+O(\log n)-\Theta(n^2)}.
\]
\end{proposition}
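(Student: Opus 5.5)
The first assertion comes straight from the proof of Theorem \ref{thm:amplified-one}. For each \((r+1)\)-set \(H\), Corollary \ref{cor:explicit-r-t} with \(t=1\) gives \(sp_{n,r}^{1-\delta_n}\) matroids. The recoverability argument makes the families for distinct \(H\) disjoint, so the certified total is \(\binom n{r+1}sp_{n,r}^{1-\delta_n}\). I will rewrite this as
\[
\binom n{r+1}sp_{n,r}^{1-\delta_n}=sp_{n,r}\cdot \binom n{r+1}\,sp_{n,r}^{-\delta_n},
\]
so the \emph{multiplier} of \(sp_{n,r}\) is \(\mu_n:=\binom n{r+1}\,sp_{n,r}^{-\delta_n}\). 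The task is to bound \(\mu_n\) from above, using only the Graham--Sloane information about \(sp_{n,r}\).

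Since \(\mu_n\) is decreasing in \(sp_{n,r}\), a lower bound on \(sp_{n,r}\) gives an upper bound on \(\mu_n\). I will use \(sp_{n,r}\ge 2^{\binom nr/n}\) from the proof of Corollary \ref{cor:GS-scale}. This gives
\[
\log_2 \mu_n\le \log_2\binom n{r+1}-\delta_n\frac{\binom nr}{n}
=\log_2\binom n{r+1}-\frac{(r+1)+\binom{r+1}{2}(n-r-1)}{n}.
\]
Here the \(\binom nr\) in the denominator of \(\delta_n\) cancels exactly against the Graham--Sloane exponent. This cancellation is the point of the statement.

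For the first term, Stirling's estimate from the proof of Proposition \ref{prop:sufficient-ratio} gives \(\log_2\binom n{r+1}=n-\tfrac12\log_2 n+O(1)=n+O(\log n)\). For the second term, \(r=\lfloor n/2\rfloor\) makes \(\binom{r+1}{2}\) of order \(n^2/8\) and \(n-r-1\) of order \(n/2\). So the numerator is \(\Theta(n^3)\), and dividing by \(n\) gives \(\Theta(n^2)\). To make the \(\Theta\) rigorous I will give explicit two-sided bounds, say between \(n^2/32\) and \(n^2\) for large \(n\), via \(r\ge (n-1)/2\) and \(n-r-1\ge n/2-1\). Combining the two terms yields \(\mu_n\le 2^{n+O(\log n)-\Theta(n^2)}\).

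The only delicate step is interpretive. I must make clear that the upper bound on \(\mu_n\) comes from plugging in the \emph{lower} bound for \(sp_{n,r}\): the exponent \(-\delta_n\) is negative, so a weaker lower bound on \(sp_{n,r}\) gives a larger, hence still valid, upper bound on \(\mu_n\). The conclusion is therefore about what this particular certificate can yield, and nothing is claimed about the true count. The remaining estimates are routine.
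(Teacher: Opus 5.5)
Your proposal is correct and follows the paper's proof essentially step for step. Both identify the multiplier \(\binom n{r+1}sp_{n,r}^{-\delta_n}\), substitute the Graham--Sloane lower bound \(sp_{n,r}\ge 2^{\binom nr/n}\) so that \(\binom nr\) cancels, and estimate the two resulting terms as \(n+O(\log n)\) and \(\Theta(n^2)\); your explicit two-sided bounds on the \(\Theta(n^2)\) term just fill in a detail the paper leaves implicit.
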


\begin{proof}
The first assertion is exactly Theorem \ref{thm:amplified-one}. Dividing the certified
lower bound by \(sp_{n,r}\) gives the certified multiplier
\[
        \binom n{r+1} sp_{n,r}^{-\delta_n}.
\]
For \(r=\lfloor n/2\rfloor\),
\[
        \binom n{r+1}=2^{n+O(\log n)}
\]
and
\[
        \delta_n\binom nr/n
        =
        \frac{(r+1)+\binom{r+1}{2}(n-r-1)}{n}
        =
        \Theta(n^2).
\]
Since the Graham--Sloane lower bound gives
\[
        sp_{n,r}\ge 2^{\binom nr/n},
\]
we have
\[
        sp_{n,r}^{-\delta_n}
        \le
        2^{-\delta_n\binom nr/n}
        =
        2^{-\Theta(n^2)}.
\]
Therefore, under this certification, the multiplier of \(sp_{n,r}\) is at most
\[
        2^{n+O(\log n)}2^{-\Theta(n^2)}
        =
        2^{n+O(\log n)-\Theta(n^2)}.
\]
\end{proof}

We also record the corresponding Graham--Sloane color-class version. This gives a fully
explicit subfamily of the construction, but it still remains logarithmic-scale rather than
ratio-scale.

\begin{proposition}[Graham--Sloane color classes inside \(V_H\)]\label{prop:GS-punctured}
Let \(0<r<n\), and define
\[
        f(X)=\sum_{x\in X}x \pmod n
        \qquad
        \left(X\in\binom{[n]}r\right).
\]
For \(k\in\Z/n\Z\), put
\[
        U_{n,r,k}=\left\{X\in\binom{[n]}r:f(X)=k\right\}.
\]
Then each \(U_{n,r,k}\) is a stable set of \(J(n,r)\). If \(H\subseteq[n]\) has size
\(r+1\), then for some \(k\in\Z/n\Z\),
\[
        |U_{n,r,k}\cap V_H|
        \ge
        \frac{1}{n}
        \left(
        \binom nr-(r+1)-\binom{r+1}{2}(n-r-1)
        \right).
\]
Consequently, for \(r=\lfloor n/2\rfloor\),
\[
        p_n-sp_n
        \ge
        \binom n{r+1}
        2^{\binom nr/n-\Theta(n^2)}.
\]
\end{proposition}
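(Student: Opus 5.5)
The plan has three parts: stability of the color classes, pigeonhole inside \(V_H\), and then feeding the resulting stable set into the one-large-hyperplane construction amplified over \(H\).

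\textbf{Stability.} Suppose \(X,Y\in U_{n,r,k}\) are adjacent in \(J(n,r)\). Then \(X=Z+a\) and \(Y=Z+b\) with \(|Z|=r-1\) and \(a\ne b\) in \([n]\setminus Z\). Hence
\[
f(X)-f(Y)\equiv a-b \pmod n.
\]
Since \(a,b\in[n]\) are distinct, \(0<|a-b|<n\), so \(a-b\not\equiv 0\pmod n\). This contradicts \(f(X)=f(Y)=k\), so each \(U_{n,r,k}\) is stable.

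\textbf{Pigeonhole.} The classes \(U_{n,r,k}\), \(k\in\Z/n\Z\), partition \(\binom{[n]}r\). Their traces on \(V_H\) therefore partition \(V_H\) into \(n\) parts. Some part has size at least \(|V_H|/n\). Lemma \ref{lem:VH-size} with \(t=1\) gives
\[
|V_H|=\binom nr-(r+1)-\binom{r+1}{2}(n-r-1),
\]
which is exactly the displayed bound.

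\textbf{Consequence.} Fix \(k\) as above, take \(r=\lfloor n/2\rfloor\), and let \(S=U_{n,r,k}\cap V_H\). Then \(S\) is a stable set of \(J(n,r)[V_H]\), and so is every subset of \(S\). Hence
\[
i(J(n,r)[V_H])\ge 2^{|S|}.
\]
By Theorem \ref{thm:induced-count}, each such subset gives a distinct rank-\(r\) paving, non-sparse-paving matroid having \(H\) as a hyperplane. By the disjointness and recoverability argument in the proof of Theorem \ref{thm:amplified-one}, we can multiply by the \(\binom n{r+1}\) choices of \(H\). This yields
\[
p_n-sp_n\ge \binom n{r+1}2^{|S|}.
\]
It remains to check the exponent:
\[
|S|\ge \frac1n\binom nr-\frac{(r+1)+\binom{r+1}{2}(n-r-1)}{n}.
\]
The subtracted term is \(\Theta(n^3)/n=\Theta(n^2)\), which gives the exponent \(\binom nr/n-\Theta(n^2)\).

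\textbf{Main obstacle.} Nothing here is deep; the only step needing care is this bookkeeping. The \(\Theta(n^2)\) should be read as a lower bound of the form \(\binom nr/n-O(n^2)\), with the implicit constant matching the deleted count \(|F_H|/n\). The pigeonhole choice of \(k\) may depend on \(H\). That is harmless, because the amplification only requires a bound for each \(H\) separately, and by symmetry the bound is uniform in \(H\).
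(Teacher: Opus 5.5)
Your proposal is correct and matches the paper's proof essentially step for step. It uses the same stability computation \(f(X)-f(Y)\equiv a-b\not\equiv 0 \pmod n\), the same pigeonhole over the \(n\) fibers restricted to \(V_H\) via Lemma \ref{lem:VH-size} with \(t=1\), and the same route through Theorem \ref{thm:induced-count} and the disjointness argument of Theorem \ref{thm:amplified-one} to reach \(\binom n{r+1}2^{|V_H|/n}\). The only point left implicit, in your version and in the paper's, is that the hypotheses \(r\ge 2\) and \(r+1\le n-1\) of Theorem \ref{thm:induced-count} hold for \(r=\lfloor n/2\rfloor\) once \(n\ge 4\); this is harmless for an asymptotic bound.
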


\begin{proof}
If \(X\) and \(Y\) are adjacent in \(J(n,r)\), then \(Y=X-a+b\) for distinct
\(a,b\in[n]\), so
\[
        f(Y)-f(X)\equiv b-a\not\equiv 0 \pmod n.
\]
Thus each fiber \(U_{n,r,k}\) is stable. Since the \(n\) fibers partition
\(\binom{[n]}r\), the sets \(U_{n,r,k}\cap V_H\) partition \(V_H\). Hence some fiber has
size at least \(|V_H|/n\). Lemma \ref{lem:VH-size}, with \(t=1\), gives
\[
        |V_H|=\binom nr-(r+1)-\binom{r+1}{2}(n-r-1).
\]
Every subset of the stable set \(U_{n,r,k}\cap V_H\) is again stable in \(J(n,r)[V_H]\), so
the one-large-hyperplane construction gives at least
\[
        2^{|U_{n,r,k}\cap V_H|}
\]
matroids for this fixed \(H\). Amplifying over all choices of \(H\), as in Theorem
\ref{thm:amplified-one}, gives
\[
        p_n-sp_n
        \ge
        \binom n{r+1}
        2^{|V_H|/n}.
\]
For \(r=\lfloor n/2\rfloor\), the deleted term divided by \(n\) is
\[
        \frac{(r+1)+\binom{r+1}{2}(n-r-1)}{n}
        =
        \Theta(n^2),
\]
which gives the displayed bound.
\end{proof}

\begin{remark}\label{rem:barrier}
The amplified bounds above do not by themselves disprove the Mayhew--Newman--Welsh--Whittle
conjecture. For one large hyperplane of size \(r+1\), the forbidden set in \(J(n,r)\) has
size
\[
        (r+1)+\binom{r+1}{2}(n-r-1)=\Theta(n^3)
\]
when \(r=\lfloor n/2\rfloor\). The induced-subgraph lemma is strong enough to preserve the
logarithmic scale but not strong enough to prove a ratio-scale estimate. The Graham--Sloane
color-class calculation gives an explicit subfamily but loses a factor \(2^{\Theta(n^2)}\)
compared with the natural Graham--Sloane scale. Thus a ratio-scale attack through this
construction would require a much sharper estimate for \(i(J(n,r)[V_H])\), or for the
hybrid sum in Theorem \ref{thm:hybrid}.
\end{remark}

\begin{problem}\label{prob:ratio-local}
For \(r=\lfloor n/2\rfloor\) and \(|H|=r+1\), estimate the ratio
\[
        \frac{i(J(n,r)[V_H])}{i(J(n,r))}
\]
more sharply. In particular, determine whether this ratio can be bounded below at
reciprocal-binomial scale, for example by \(2^{-n+\omega(\log n)}\), or whether a loss of
order \(2^{-\Omega(n^2)}\) is unavoidable.
\end{problem}

\begin{problem}\label{prob:hybrid-partition}
For \(r=\lfloor n/2\rfloor\), estimate the hybrid sum
\[
        \sum_{\varnothing\ne\cC}
        i\bigl(J(n,r)[V_{\cC}]\bigr),
\]
where \(\cC\) ranges over stable sets of \(G_{n,r+1}^{(6)}\). Can this sum be compared at
ratio scale with \(sp_{n,r}\), or only at logarithmic scale?
\end{problem}

\section*{Acknowledgments}

I thank Prof. Rudi Pendavingh for suggesting the use of Lemma 4.2 of \cite{PvdP16} in the
induced Johnson subgraph considered here, and for comments motivating the
large-hyperplane amplification.

\end{document}